\begin{document}

\title*{Revisitation of a Tartar's result \\on
a semilinear hyperbolic system\\ with null condition}
\author{Roberta Bianchini and Gigliola Staffilani}
\institute{Roberta Bianchini \at Sorbonne Universit\'e, LJLL, UMR CNRS-UPMC 7598, 75252-Paris Cedex 05
(France) \& Consiglio Nazionale delle Ricerche, IAC, via dei Taurini 19, I-00185 Rome (Italy)  \email{r.bianchini@iac.cnr.it}
\and Gigliola Staffilani \at Department of Mathematics, Massachusetts Institute of Technology, 77 Massachusetts Avenue,
Cambridge, MA 02139 \email{gigliola@math.mit.edu}}
%
%
\maketitle

\abstract{We revisit a method introduced by Tartar for proving global well-posedness of a semilinear hyperbolic system with \emph{null} quadratic source in one space dimension. A remarkable point is that, since no dispersion effect is available for 1D hyperbolic systems, Tartar's approach is entirely based on spatial localization and finite speed of propagation.}

\section{Introduction}
\label{sec:1}

We consider the semilinear hyperbolic system with quadratic source term:
\begin{equation}
\label{eq:Tartar}
\begin{aligned}
\partial_t u_i + c_i \partial_x u_i + \sum_{j, k} A_{ijk} u_j u_k=0, \\
u_i(x, 0)=\phi_i(x),
\end{aligned}
\end{equation}

$$\quad x \in \mathbb{R}, \quad t \in I, \quad i=1, \cdots, p, $$

where the coefficients $A_{ij}=A_{ji}$ are symmetric, and satisfy the following condition:

$$(A) \qquad \qquad  A_{ijk}=0 \quad \text{if} \quad c_j=c_k \quad \text{for all} \; i=1, \cdots, p.$$

We now make a connection of Assumption (A) with the \emph{null condition} for the semilinear wave equation, which is presented in \cite{Klainerman1} and will be discussed later on.
To this end, looking at the simplest $2\times 2$ case,
\begin{equation}\label{eq:2x2system}
\begin{aligned}
\partial_t u_1+c_1 \partial_x u_1=\alpha u_1u_2, \\
\partial_t u_2+c_2 \partial_x u_2=\beta u_1u_2, \\
\end{aligned}
\end{equation}
we establish a change of variables by defining $w$ such that
\begin{equation}\label{def:w}
u_1=\partial_t w - c_1 \partial_x w, \quad u_2=\partial_t w - c_2 \partial_x w,
\end{equation}
where $u_1, u_2$ are solutions to (\ref{eq:2x2system}). In terms of the new variable $w$, system (\ref{eq:2x2system}) rewrites as follows:
\begin{align*}
\partial_{tt}w - c_1^2 \partial_{xx} w = \alpha (\partial_t w-c_1\partial_x w) (\partial_t w - c_2 \partial_x w), \\
\partial_{tt}w - c_2^2 \partial_{xx} w = \beta (\partial_t w-c_1\partial_x w) (\partial_t w - c_2 \partial_x w).
\end{align*}
Discarding the trivial solution, we end up with the compatibility condition
$$(C) \qquad \qquad  c_1^2=c_2^2, \quad \alpha=\beta.$$
Combining $(C)$ with $(A)$, we are left with $c_1=-c_2$ and $\alpha=\beta$. By a simple rescaling, this yields a classical example of semilinear wave equation with null condition
\begin{equation}\label{eq:John}
\partial_{tt}w-\partial_{xx} w = (\partial_t w)^2 - (\partial_x w)^2,
\end{equation}
first introduced by John in \cite{John}.
As an important research program in nonlinear partial differential equations, the investigation on the long-time behavior of smooth solutions to dispersive equations started in the Eighties with the seminal papers by Klainerman \cite{Klainerman1} and Christodoulou \cite{C}. A deep historical and mathematical survey on the topic can be found in \cite{Lannes}. A general feature is that the linear dispersive terms of the equation tend to force
the solution to spread and to decay, but the contribution of the nonlinear terms is very different. Since dispersion increases with space dimension, a first class of global existence results has been obtained in dimension $d=4$ by Klainerman \cite{Klainerman0}. As showed by John, \cite{John}, in lower space dimensions the nonlinearity can lead to blow up in finite time for arbitrarily small data. In this case, a precise structure of the nonlinearity, the so-called \emph{null form}, introduced by Klainermann \cite{Klainerman1} and Christodoulou \cite{C}, prevents the formation of singularities. \\
Later, an important contribution to extend the notion of null forms was given by Germain, Masmoudi, Shatah, see \cite{GM,GMS2}. The main idea of this approach is to couple spatial localization via the \emph{space-time resonance method}, using space-weighted estimates, with time oscillations, using normal forms.
We refer to a recent paper by Pusateri and Shatah, \cite{Pusateri}, for a result on the semilinear wave equation with \emph{nonresonant bilinear forms}.\\
In the more general case of systems, quadratic source terms satisfying the null condition for the wave equation are actually equivalent to the compatible forms for hyperbolic systems, \cite{HJ}, which are the ones having the weakly sequential continuity described by compensated compactness, \cite{Tartar1}. An analogous result for hyperbolic systems is indeed proved in \cite{GLZ} in the linear case. \\
Actually, the first contribution in the general case of hyperbolic systems of semilinear equations is due to an unpublished paper by Tartar \cite{Tartar}. In \cite{Tartar}, the author provides results on well-posedness and long-time behavior for a semilinear hyperbolic system with quadratic source term, satisfying a non-crossing condition for the charactericts (Assumption (A)), which is actually equivalent to the null condition for the semilinear wave equation, as showed in \cite{HJ}. The approach developed by Tartar is completely different from both the vector-field method \cite{Klainerman1} and normal forms techniques \cite{GM}. In one space dimension, there is indeed no dispersion effect neither for the wave equation, nor for the semilinear hyperbolic system in (\ref{eq:Tartar}). Tartar's idea is then based on spatial localization and finite speed of propagation: since the characteristic velocities of the interacting waves are different (Assumption (A)), then these waves interact only for a finite time before the separation of the cones of dependency. A similar approach was used by Bianchini and Bressan in \cite{Stefano}. \\
We point out that even in the simplest $2 \times 2$ case, system (\ref{eq:2x2system}) reduces to the semilinear wave equation (\ref{eq:John}) only in the special case of $c_1=-c_2=1, \; \alpha=\beta$. 
Therefore, besides the lack of dispersion, techiques which are built \emph{ad hoc} for the wave equation do not apply to systems (\ref{eq:Tartar})-(\ref{eq:2x2system}) for generic constants $c_i, \, A_{ijk}$, and new ideas are needed in that case. The approach developed in \cite{Tartar} will be presented here. This work is indeed a revisitation of the arguments in \cite{Tartar}, where we attempted to fix the notation, prove some intermediate results, as well as provide an explicit description of the objects and tools. \\
The main result, which is due to Tartar, is stated here.
\begin{theorem}[Global existence for small $L^1$ data]
\label{thm:tartar}
Assume condition $(A)$ on system (\ref{eq:Tartar}). Then, there exists $E_0>0$, $k_1>0, k_2>1$ such that, if the initial data $\phi_i, \, i=1, \cdots, p$ satisfy
$$\phi_i \in L^1(\mathbb{R}), \quad \sum_i \|\phi_i\|_{L^1(\mathbb{R})}=\sum_i \varepsilon_i \le E_0,$$
then system (\ref{eq:Tartar}) admits a unique solution, which, $\forall i=1, \cdots, p,$ satisfies
$$\sum_i \|\partial_t u_i + c_i \partial_x u_i \|_{L^1(\mathbb{R}^+ \times \mathbb{R})} \le k_1 \sum_i \|\phi_i\|_{L^1(\mathbb{R})}.$$
Moreover, let $\bar{\phi}_i \in L^1(\mathbb{R})$ with $\sum_i \|\bar{\phi}_i\|_{L^1(\mathbb{R})} < E_0$, and let $\bar{u}_i$ be the solution to (\ref{eq:Tartar}) with initial data $\bar{\phi}_i$. Then
\begin{align*}
\sup_{t \in \mathbb{R}^+} \sum_i \|u_i - \bar{u}_i\|_{L^1(\mathbb{R})} \le k_2 \sum_i \|\phi_i - \bar{\phi}_i\|_{L^1(\mathbb{R})}.
\end{align*}
\end{theorem}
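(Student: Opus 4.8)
The plan is to combine a mild (characteristic) reformulation with a Glimm-type interaction potential, and to close everything through a coupled continuity (bootstrap) argument; global existence, uniqueness and the Lipschitz estimate then all follow from the same a priori bounds. Integrating $\partial_t u_i+c_i\partial_x u_i=-N_i$ with $N_i:=\sum_{j,k}A_{ijk}u_ju_k$ along the characteristic through $(x,t)$ gives
\[
u_i(x,t)=\phi_i(x-c_it)-\int_0^t N_i\big(x-c_i(t-s),s\big)\,ds .
\]
Since transport preserves the $L^1_x$ norm, all the work lies in controlling the \emph{space--time} mass of the quadratic source. The engine is the transversality estimate: for $c\neq c'$,
\[
\int_0^\infty\!\!\int_{\mathbb R}|f(x-ct)|\,|g(x-c't)|\,dx\,dt\le \frac{1}{|c-c'|}\,\|f\|_{L^1}\|g\|_{L^1},
\]
proved by the substitution $y=x-ct$, $s=(c-c')t$. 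This is the quantitative form of ``waves with different speeds interact only for a finite time,'' and here Assumption $(A)$ is decisive: $N_i$ contains only products $u_ju_k$ with $c_j\neq c_k$, so every interacting pair is genuinely transversal and $\kappa:=\min\{|c_j-c_k| : A_{ijk}\neq 0\}>0$.

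On a time interval $[0,T]$ I would propagate the mass $M(T):=\sup_{t\le T}\sum_i\|u_i(t)\|_{L^1}$ and the total interaction $I(T):=\sum_{c_j\neq c_k}\int_0^T\!\int_{\mathbb R}|u_ju_k|\,dx\,dt$. Taking $L^1_x$ in the mild formula and summing over $i$ gives at once $M(T)\le \varepsilon+C_A\,I(T)$, where $\varepsilon:=\sum_i\|\phi_i\|_{L^1}$ and $C_A$ absorbs $\max|A_{ijk}|$ and combinatorial factors. Bounding $I(T)$ for the \emph{nonlinear} solution is the crux, since the bare estimate above applies only to free transport; I would introduce the interaction potential
\[
Q(t):=\sum_{c_j<c_k}\iint_{y<x}|u_j(x,t)|\,|u_k(y,t)|\,dx\,dy\ \le\ M(t)^2 .
\]
Using $\partial_t|u_j|+c_j\partial_x|u_j|=-\,\mathrm{sgn}(u_j)N_j$ and integrating by parts over $\{y<x\}$, the transport part of $\dot Q$ produces the favourable boundary term $-\sum_{c_j<c_k}|c_j-c_k|\int|u_ju_k|\,dx$, while the source part is controlled by $C_A\,M(t)\sum_{c_\ell\neq c_m}\int|u_\ell u_m|\,dx$; integrating in time with $Q(0)\le\varepsilon^2$ yields $\kappa\,I(T)\le C\varepsilon^2+C_A\,M(T)\,I(T)$. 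Under the provisional hypothesis $M(T)\le 2\varepsilon$, choosing $E_0$ so small that $2C_A\varepsilon\le\kappa/2$ gives $I(T)\le 2C\varepsilon^2/\kappa$, and feeding this back into $M(T)\le\varepsilon+C_AI(T)$ gives $M(T)<2\varepsilon$, strictly improving the hypothesis. By continuity in $T$ the bounds then hold for all $T$, which extends the (standard) local solution globally and, since $\sum_i\|\partial_t u_i+c_i\partial_x u_i\|_{L^1(\mathbb R^+\times\mathbb R)}\le C_A\,I(\infty)$ and $\varepsilon\le E_0$, delivers the stated bound with $k_1=2CC_AE_0/\kappa$.

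For the Lipschitz estimate I set $v_i=u_i-\bar u_i$; subtracting the equations produces the \emph{linear} transversal system $\partial_t v_i+c_i\partial_x v_i=-\sum_{j,k}A_{ijk}(v_ju_k+\bar u_jv_k)$, whose source is bilinear with exactly one factor drawn from the small-data solutions $u,\bar u$. Running the same potential computation on $\sum_i\|v_i(t)\|_{L^1}$ — the crossing term now controlling $\int|v_j|(|u_k|+|\bar u_k|)$ while the source carries a factor of order $\varepsilon$ — yields
\[
\sup_t\sum_i\|v_i(t)\|_{L^1}\le \sum_i\|\phi_i-\bar\phi_i\|_{L^1}+C'\varepsilon\,\sup_t\sum_i\|v_i(t)\|_{L^1},
\]
and absorbing the last term for $E_0$ small gives the estimate with $k_2=(1-C'E_0)^{-1}>1$. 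Uniqueness is the special case $\phi=\bar\phi$.

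I expect the main obstacle to be precisely the nonlinear upgrade of the transversality estimate, i.e.\ the bound on $I(T)$: finite interaction time is immediate for free waves, but for the coupled system one must show it survives the feedback of the quadratic source, which is what forces the genuinely coupled bootstrap between $M$ and $I$. The potential $Q$ is the device that lets the transversal crossing rate $\kappa$ dominate the self-reinforcing source error, and it is exactly Assumption $(A)$ — no $c_j=c_k$ self-interaction, so $\kappa>0$ — that keeps the estimate from degenerating, consistently with the finite-speed-of-propagation philosophy of \cite{Tartar,Stefano}.
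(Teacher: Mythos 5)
Your argument is essentially correct as an a priori estimate, but it takes a genuinely different route from the paper. The paper never introduces an interaction potential or a bootstrap: its key observation (Proposition \ref{prop-fubini}) is that the transversality estimate you state for \emph{free} waves in fact holds verbatim for arbitrary solutions of the inhomogeneous transport equation (\ref{eq:transport}), with the norm $|||v_i|||_{V_i}=\|f_i\|_{L^1(D)}+\|\phi_i\|_{L^1(J)}$ replacing $\|\phi\|_{L^1}$. Once the bilinear estimate is available at that level of generality, the Picard iteration (\ref{def:approx-system}) closes directly in the spaces $V_i$ --- each iterate is a transport solution with $L^1$ source, so the product of two iterates is automatically in $L^1(D)$ with the right bound --- and existence, uniqueness and the Lipschitz dependence all drop out of the contraction constant $4\gamma E_0<1$. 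Your Glimm-type functional $Q$ is the device that upgrades the free-wave estimate to the nonlinear solution; Tartar's device is to upgrade the free-wave estimate to the whole class $V_j\times V_k$, which makes the nonlinear upgrade unnecessary. Your approach is closer in spirit to \cite{Stefano} and has the advantage of producing a monotone quantity with physical meaning; the paper's approach is shorter and packages construction, uniqueness and stability into one fixed-point argument.

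The one step you should not wave away is ``extends the (standard) local solution globally.'' For data that are merely $L^1$ there is no standard local theory: $L^1\cdot L^1\not\subset L^1_{\mathrm{loc}}$, so the quadratic source $\sum A_{ijk}u_ju_k$ does not even define a distribution for generic $L^1$ functions $u_j,u_k$; the fact that it does for \emph{transversal transport solutions} is precisely the content of the bilinear estimate, and it must be built into the construction of the solution, not invoked after the fact. Concretely, you would need either to run your $M$--$I$ bootstrap on the Picard iterates themselves (at which point you are reproducing Proposition \ref{prop-fubini}), or to mollify the data, derive your bounds for smooth solutions, and pass to the limit using your stability estimate --- which is legitimate but must be said, since the stability estimate is itself part of what is being proved. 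Relatedly, deducing uniqueness as ``the special case $\phi=\bar\phi$'' only gives uniqueness within the class of solutions satisfying your a priori bounds; the paper obtains it for the fixed point of the iteration, which is the class in which existence is established. With these points made explicit, your proof goes through.
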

\section{Proof of the theorem}
\label{sec:2}
In this section, we revisit step by step the argument developed by Tartar in \cite{Tartar} for proving Theorem \ref{thm:tartar}. For the sake of clarity, we consider the case of compactly supported initial data $\phi_i(x),\; i=1, \cdots, p$, whose support is contained in an interval $J=[a, b]$ of the real line. However, the same proof applies to the case of more general initial conditions satisfying the assumptions of Theorem \ref{thm:tartar}, as showed at the end of this work.\\
The proof will be split in different steps.
\subsection*{Step 1 - Definition of the main tools}
We start by defining 
\begin{equation}\label{def:dominio-D}
D:=\{(x, t) \; | \; x-c_i t \in J=[a, b], \text{  for  } i=1, \cdots, p\}.
\end{equation}
Consider now the transport equation
\begin{equation}\label{eq:transport}
\begin{aligned}
& \partial_t v_i + c_i \partial_x v_i=f_i(x, t), \quad (x, t) \in D, \\
& v_i(x, 0)=\phi_i(x), \qquad \qquad x \in J,
\end{aligned}
\end{equation}
for $i=1, \cdots, p$, and $f_i$ smooth enough functions on a space-time domain $D$. The explicit solution is given by
\begin{equation}\label{eq:solution-transport}
v_i(x, t)=\phi_i(x-c_i t)+\int_0^t f_i(x-c_i(t-s), s) \, ds \quad (x, t) \in D.
\end{equation}
For later purposes, we define the following space
\begin{equation}\label{def-Vi}
V_i:=\{v_i \text{ solution to (\ref{eq:transport}) defined on D, with  } f_i \in L^1(D), \; \phi_i \in L^1(J)\},
\end{equation}
equipped with norm
\begin{equation}\label{def:norm}
|||v_i|||_{V_i}=\|f_i\|_{L^1(D)}+\|\phi_i\|_{L^1(J)}.
\end{equation}
Without loss of generality, assume now that all the speeds are positive constant values $c_k>0, \; k=1, \cdots, p$. According to the definition in (\ref{def:dominio-D}), the effective domain $D$ corresponds to the region of the space-time between the lines 
$$x=a+\overline{c}t, \quad x=b+\underline{c}t,$$
intersecting at time $T^*=\dfrac{b-a}{\overline{c}-\underline{c}}$, where
$$\overline{c}:=\max_k c_k, \quad \underline{c}:=\min_k c_k.$$
More precisely, 

\begin{equation}\label{def:explicitD}
D=\{ (x,t) \quad | \quad t \in [0, T^*], \quad  b+\underline{c}t \le x \le a+\overline{c}t \},
\end{equation}

\begin{figure}[h!]
{\def\svgwidth{350pt}
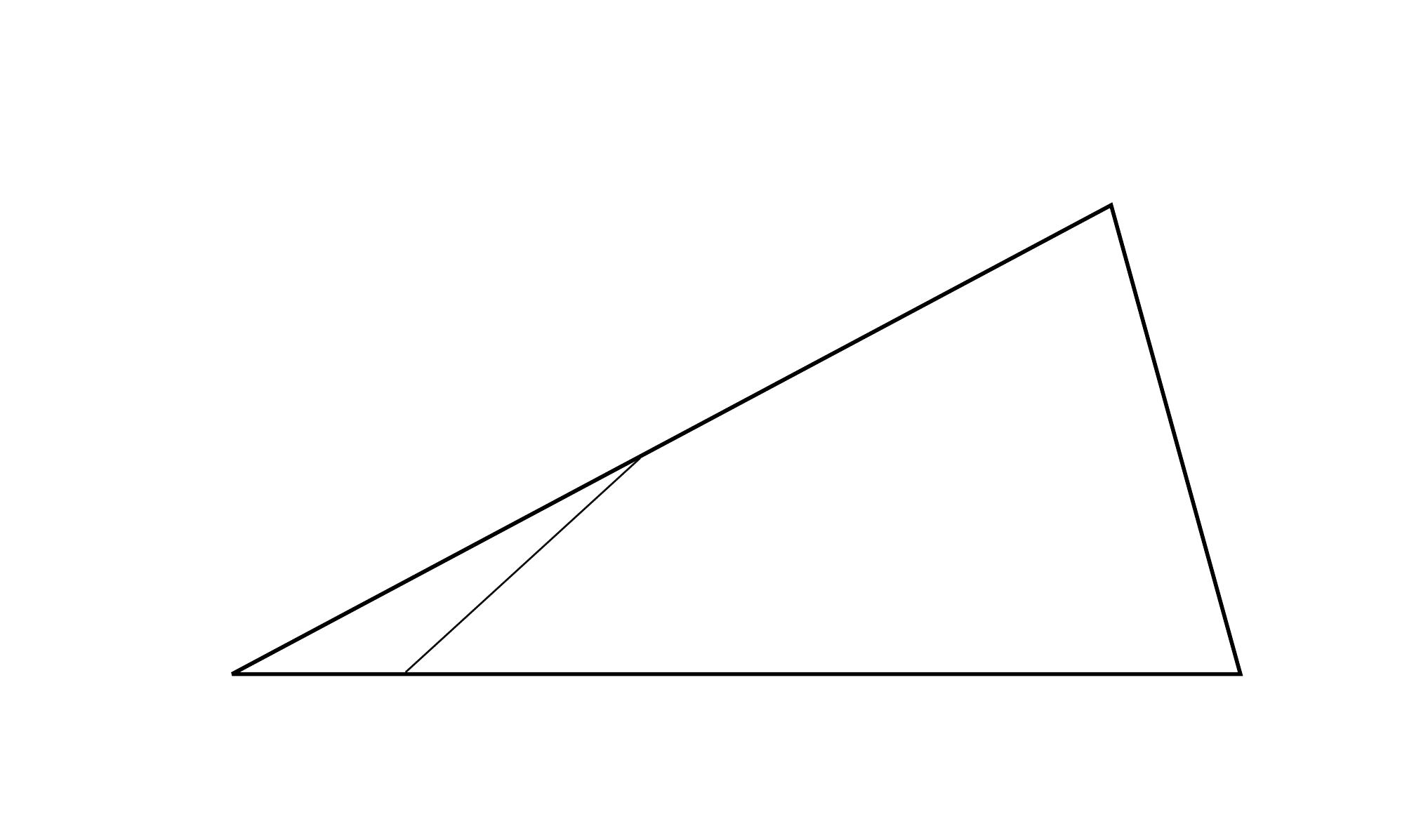}
\caption{Representation of the domain $D$.}
\label{fig:domain}
\end{figure}

as in Figure \ref{fig:domain}.
Now, for any $y \in J=[a, b]$ fixed, consider the following set,
\begin{equation}\label{def:Ky}
K_{y^i}:=\{ \tau \in I \; | \; (y+c_i \tau, \tau) \in D\},
\end{equation}

\begin{figure}[h!]
{\def\svgwidth{350pt}
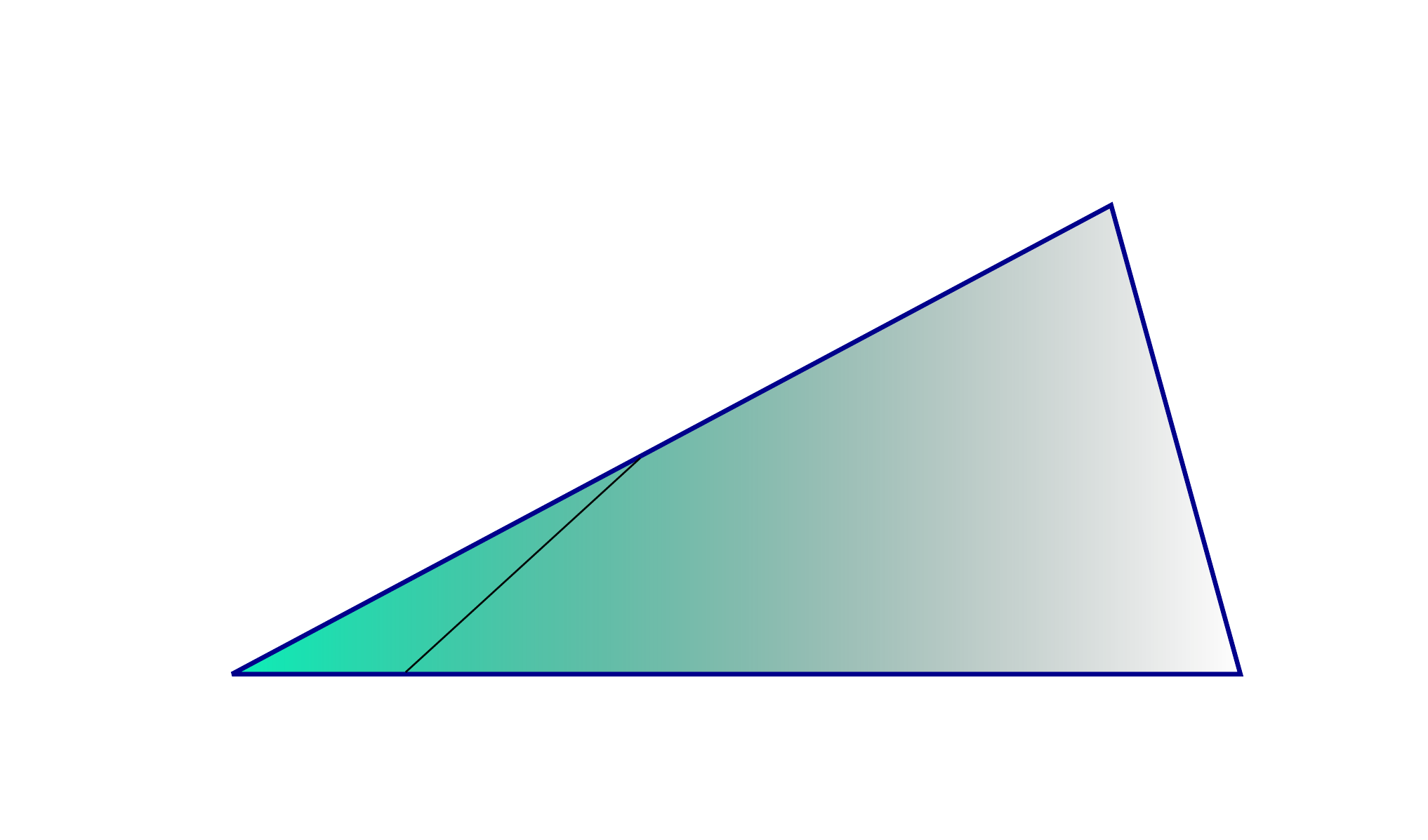}
\caption{Representation of $K_{y^i}$. In this picture, the triangle $\displaystyle D'=\cup_{y \in J } K_{y^i}.$}
\label{fig:ky}
\end{figure}

as in Figure \ref{fig:ky}. For $t < T^*$, we can also define the subset (Figure 3)

\begin{equation}\label{def:Ky}
K_{y_t^i}:=\{ \tau \in I \; | \; (y+c_i \tau, \tau) \in D\} \cap \{0 \le \tau \le t\}.
\end{equation}

\begin{figure}[h!]
{\def\svgwidth{350pt}
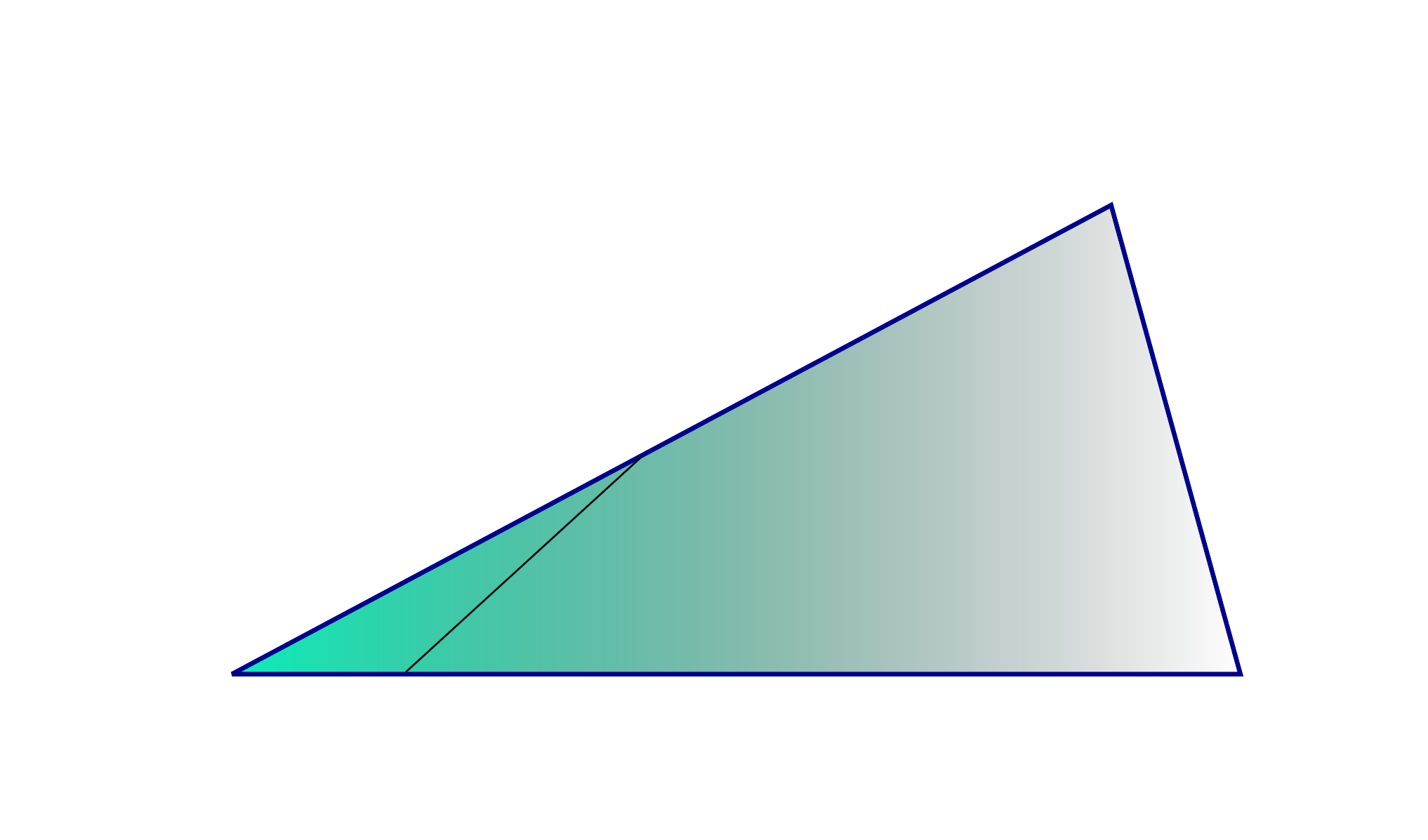}
\caption{Representation of $K_{y_t^i}$, where $\displaystyle D'=\cup_{y \in J } K_{y^i}.$}
\label{fig:kyt}
\end{figure}

Defining the new variable $y=x-c_i t$, formula (\ref{eq:solution-transport}) exactly reads 
\begin{align*}
v_i(y)=\phi_i(y)+\int_{K_{y_t^i}} f_i(y+c_is, s) \, ds, \quad y \in J,
\end{align*}
and then
\begin{equation}\label{eq:norm-Vi-new}
\begin{aligned}
|||v_i|||_{V_i}&=\int_a^b |\phi_i(y)| \, dy + \int_a^b \int_{K_{y^i}} |f_i(y+c_is, s)| \, ds \,  dy \\
& = \int_a^b |\phi_i(y)| \, dy + \int_a^b \int_0^{\min\{ \frac{y-a}{\overline{c}-c_i}, \frac{b-y}{c_i-\underline{c}}\}} |f_i(y+c_is, s)| \, ds \,  dy.
\end{aligned}
\end{equation}
One has the following.
\begin{lemma}
Consider equation (\ref{eq:transport}),whose solution is (\ref{eq:solution-transport}). Then:
\begin{equation}\label{eq:lemma}
\begin{aligned}
\|v_i\|_{L^1(D)} \le c(T^*) |||v_i|||_{V_i}, \quad \text{where} \quad T^*=\dfrac{b-a}{\overline{c}-\underline{c}}.
\end{aligned}
\end{equation}
\end{lemma}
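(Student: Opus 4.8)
The plan is to read off the $L^1(D)$ bound directly from the Duhamel representation (\ref{eq:solution-transport}), by passing to the straightened coordinates $y=x-c_i t$ in which the characteristics of the $i$-th equation become vertical, and then exploiting the single geometric fact supplied by finite speed of propagation: every characteristic chord of $D$ has temporal length at most $T^*$.

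First I would write $\|v_i\|_{L^1(D)}=\int_D|v_i(x,t)|\,dx\,dt$ and change variables by $y=x-c_i t$ at fixed $t$. This shear has unit Jacobian, so $dx\,dt=dy\,dt$, and it carries $D$ onto the triangle $D'$ of Figures \ref{fig:ky}--\ref{fig:kyt}: the variable $y$ ranges over $J=[a,b]$, and for each fixed $y$ the time $t$ ranges over the interval $K_{y^i}$ of (\ref{def:Ky}). In these coordinates the solution is exactly the expression recorded just above the statement, namely $v_i(y+c_i t,t)=\phi_i(y)+\int_{K_{y_t^i}} f_i(y+c_i s,s)\,ds$, whence
$$\|v_i\|_{L^1(D)}=\int_a^b\int_{K_{y^i}}\Bigl|\phi_i(y)+\int_{K_{y_t^i}} f_i(y+c_i s,s)\,ds\Bigr|\,dt\,dy.$$

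Next I would apply the triangle inequality to split the integrand into a data part and a source part, and bound each by $T^*$ times a term of the norm (\ref{def:norm}). For the data part, $\phi_i(y)$ is constant in $t$, so integrating over $K_{y^i}$ yields the factor $|K_{y^i}|$; since $D\subset\{0\le t\le T^*\}$ we have $K_{y^i}\subseteq[0,T^*]$, hence $|K_{y^i}|\le T^*$, giving $T^*\|\phi_i\|_{L^1(J)}$. For the source part I would use the inclusion $K_{y_t^i}\subseteq K_{y^i}$ to enlarge the inner $s$-integral to all of $K_{y^i}$, after which the integrand no longer depends on $t$ and the outer $t$-integration contributes only the factor $|K_{y^i}|\le T^*$; recognizing from (\ref{eq:norm-Vi-new}) that $\int_a^b\int_{K_{y^i}}|f_i(y+c_i s,s)|\,ds\,dy=\|f_i\|_{L^1(D)}$, this part is bounded by $T^*\|f_i\|_{L^1(D)}$. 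Summing the two contributions gives (\ref{eq:lemma}) with the explicit constant $c(T^*)=T^*$.

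I do not expect a genuine analytic obstacle here; the estimate is essentially bookkeeping once the correct change of variables is fixed. The only point demanding care is to keep the two time-windows distinct --- the full window $K_{y^i}$ governing the outer integration in $t$ versus the truncated window $K_{y_t^i}$ over which the forcing is accumulated in the Duhamel formula --- and to invoke the nesting $K_{y_t^i}\subseteq K_{y^i}\subseteq[0,T^*]$. This last inclusion is exactly the analytic imprint of finite speed of propagation, and it is what caps the interaction time by $T^*$ and produces the stated constant.
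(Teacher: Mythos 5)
Your proof is correct and follows essentially the same route as the paper's: triangle inequality on the Duhamel formula, passage to the characteristic variable $y=x-c_it$, and the crude bound of the interaction time by $T^*$ via $K_{y_t^i}\subseteq K_{y^i}\subseteq[0,T^*]$, yielding the same constant $c(T^*)=T^*$. The only (immaterial) difference is the order of integration: the paper integrates over $x$ at fixed $t$ and then over $t\in[0,T^*]$, while you fix $y\in J$ and integrate $t$ over $K_{y^i}$.
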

\begin{proof}
\begin{align*}
& \int_{D} |v_i(x,t)| \, dx \, dt  \le \int_D |\phi_i(x-c_i t)| \, dx \, dt + \int_D \int_0^t |f_i(x-c_i(t-s), s)| \, ds \, dt \, dx \\
&\le  \int_0^{\frac{b-a}{\overline{c}-\underline{c}}} \int_{a+\overline{c}t}^{b+\underline{c}t} |\phi_i(x-c_it)| \; dx \, dt + \int_0^{\frac{b-a}{\overline{c}-\underline{c}}} \int_{a+\overline{c}t}^{b+\underline{c}t} \int_0^t |f_i(x-c_it+c_is, s)| \, ds \, dx \, dt\\
& = \int_0^{\frac{b-a}{\overline{c}-\underline{c}}} \int_{a+(\overline{c}-c_i)t}^{b+(\underline{c}-c_i)t} |\phi_i(y)| \; dy \, dt + \int_0^{\frac{b-a}{\overline{c}-\underline{c}}} \int_{a+(\overline{c}-c_i)t}^{b+(\underline{c}-c_i)t} \int_{K_{y_t^i}} |f_i(y+c_is, s)| \, ds \, dy \, dt\\
&  \le  T^* \int_J |\phi_i(y)| \, dy +  \int_0^{\frac{b-a}{\overline{c}-\underline{c}}} dt \int_{a}^{b} \int_{K_{y^i}} |f_i(y+c_i s, s)| \, ds \, dy \\
& \le T^* |||v_i|||_{V_i},
\end{align*}
where the last inequality follows from (\ref{eq:norm-Vi-new}).
\end{proof}
\subsection*{Step 2 - A Fubini-type theorem}
We now perform the same computation for $\displaystyle \int_D |v_i \, v_j| \, dx \, dt$. We will see that space and time play somehow the same role when considering the interactions between waves with different velocities $c_i \neq c_j$, and there will be no dependency on $T^*$ as in inequality (\ref{eq:lemma}).
\begin{proposition}\label{prop-fubini}
If $v_j \in V_j$ and $v_k \in V_k$ with $c_j \neq c_k$ then $v_j v_k \in L^1(D)$ and

\begin{equation}\label{lemma-fubini}
\|v_j v_k\|_{L^1(D)} \le \dfrac{1}{|c_k-c_j|} |||v_j|||_{V_j} |||v_k|||_{V_k}.
\end{equation}
\end{proposition}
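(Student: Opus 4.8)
The plan is to reduce the bilinear bound to a single change of variables in the $(x,t)$-plane, exploiting that the two characteristic directions are transversal precisely because $c_j \neq c_k$. First I would dominate each factor by a profile depending only on its own characteristic variable. Starting from the representation formula (\ref{eq:solution-transport}) and writing $y = x - c_i t$, I would bound, for any $(x,t)\in D$,
$$|v_i(x,t)| \le |\phi_i(y)| + \int_0^t |f_i(y + c_i s, s)|\, ds \le |\phi_i(y)| + \int_{K_{y^i}} |f_i(y + c_i s, s)|\, ds =: g_i(y),$$
where the second inequality uses $K_{y_t^i}\subseteq K_{y^i}$ together with the nonnegativity of the integrand. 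The crucial structural gain is that $g_i$ is a function of the single variable $y = x - c_i t$, so in the product $v_j v_k$ the two factors become functions of two \emph{different} linear combinations of $(x,t)$.

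Next I would record, by the rewriting (\ref{eq:norm-Vi-new}) of the norm, that $\int_a^b g_i(y)\, dy = |||v_i|||_{V_i}$; thus each profile is integrable on $J$ with total mass equal to the corresponding $V_i$-norm. This is what converts the envelope bound into the right-hand side of (\ref{lemma-fubini}).

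The main step is then the change of variables $(x,t)\mapsto (y,z)$ with $y = x - c_j t$ and $z = x - c_k t$. Since $c_j \neq c_k$ this map is a linear isomorphism whose Jacobian determinant is $c_j - c_k$, so $dx\, dt = |c_k - c_j|^{-1}\, dy\, dz$. Because $(x,t)\in D$ forces both $x - c_j t \in J$ and $x - c_k t \in J$, the image of $D$ is contained in the square $J\times J$. Combining the envelope bound, this change of variables, and Tonelli's theorem applied to the nonnegative integrand $g_j(y)\, g_k(z)$ yields
$$\int_D |v_j v_k|\, dx\, dt \le \frac{1}{|c_k - c_j|}\int_a^b g_j(y)\, dy \int_a^b g_k(z)\, dz = \frac{1}{|c_k-c_j|}\,|||v_j|||_{V_j}\,|||v_k|||_{V_k},$$
which is exactly (\ref{lemma-fubini}) and in particular shows $v_j v_k \in L^1(D)$.

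The only genuinely delicate point is the geometric and measure-theoretic bookkeeping of the change of variables: one must verify that the linear map is a bijection of $D$ onto its image inside $J\times J$ (guaranteed by $c_j\neq c_k$) and that nothing is lost when enlarging $K_{y_t^i}$ to $K_{y^i}$. Everything else is the triangle inequality and Tonelli. Conceptually, the factor $|c_k - c_j|^{-1}$ is the analytic signature of transversality: waves with distinct speeds overlap on a region whose size scales inversely with the gap between their velocities, which is exactly why---in contrast to the Lemma---no dependence on $T^*$ survives here.
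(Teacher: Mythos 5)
Your proof is correct and rests on the same mechanism as the paper's: the change to characteristic coordinates $(y,z)=(x-c_jt,\,x-c_kt)$, whose Jacobian produces the factor $|c_j-c_k|^{-1}$, combined with the identification $\int_J g_i\,dy=|||v_i|||_{V_i}$ from (\ref{eq:norm-Vi-new}). The only organizational difference is that you dominate each factor by the single-variable envelope $g_i(x-c_it)$ and perform one change of variables, whereas the paper expands the product $v_jv_k$ into four terms (data--data, two cross terms, source--source) and applies the same transversality bound to each; your packaging is a clean equivalent and, if anything, makes the origin of the constant more transparent.
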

\begin{proof}
Notice that the maximal time $t_j$ such that $z+c_j t \in D$ for $t \le t_j$ is determined by the system 
\begin{align*}
z+c_i t = a + \overline{c} t, \quad \text{i.e.} \quad t=\dfrac{z-a}{\overline{c}-c_j}, \\
z+c_i t = b + \underline{c} t, \quad \text{i.e.} \quad t=\dfrac{b-z}{c_j-\underline{c}}. 
\end{align*}
This way, using formula (\ref{eq:norm-Vi-new}), one gets
\begin{align*}
& \int_D  |v_i(x,t) v_j(x,t)| \, dx \, dt 
 \le \frac{1}{|c_i-c_j|} \int_{J \times J} |\phi_i(y) \phi_j(z)| \, dy \, dz\\
& + \frac{1}{|c_i-c_j|}\int_J |\phi_i(y)| \,  dy \, \int_J \int_0^{\min\{ \frac{z-a}{\overline{c}-c_j}, \frac{b-z}{c_j-\underline{c}}\}} |f_j(z+c_js, s)| \, ds \, dz \\
& + \frac{1}{|c_i-c_j|} \int_J |\phi_j(z)| \,  dz \, \int_J \int_0^{\min\{ \frac{y-a}{\overline{c}-c_i}, \frac{b-y}{c_i-\underline{c}}\}} |f_i(y+c_is, s)| \, ds \, dy \\
& + \frac{1}{|c_i-c_j|} \int_{J \times J} dy \, dz \int_0^{\min\{ \frac{y-a}{\overline{c}-c_i}, \frac{b-y}{c_i-\underline{c}}\}} \int_0^{\min\{ \frac{z-a}{\overline{c}-c_j}, \frac{b-z}{c_j-\underline{c}}\}} |f_i(y+c_is, s)| |f_j(z+c_j \tau, \tau)| \, ds \, d\tau \\\\
& \le \frac{1}{|c_i-c_j|} \int_{J \times J} |\phi_i(y) \phi_j(z)| \, dy \, dz\\
& + \frac{1}{|c_i-c_j|}\int_J |\phi_i(y)| \,  dy \, \int_J \int_{K_{z^j}} |f_j(z+c_js, s)| \, ds \, dz \\
& + \frac{1}{|c_i-c_j|} \int_J |\phi_j(z)| \,  dz \, \int_J \int_{K_{y^i}} |f_i(y+c_is, s)| \, ds \, dy \\
& + \frac{1}{|c_i-c_j|} \int_J dy \int_{K_{z^j}} |f_i(y+c_is, s)| \, ds \int_J dz \int_{K_{y^i}}  |f_j(z+c_j \tau, \tau)| \, d\tau, \\
\end{align*}
which ends the proof again from (\ref{eq:norm-Vi-new}).
The following lemma comes directly from (\ref{eq:transport}) and (\ref{def:norm}).
\begin{lemma}\label{lem:equivalence-norms}
Let $v_i$ be the solution to (\ref{eq:transport}) and recall the definition of the norm in (\ref{def:norm}). If $\|\phi_i\|_{L^1(J)} \le \varepsilon $, for some positive constant $\varepsilon$ small enough, then the following holds:

\begin{equation}\label{ineq:equiv-norms}
|||v_i|||_{V_i}-\varepsilon \le \|\partial_t v_i + c_i \partial_x v_i \|_{L^1(D)} \le |||v_i|||_{V_i}.
\end{equation}
\end{lemma}
\subsection*{The fixed point scheme}
We come back to the proof of the theorem defining the iterative scheme
\begin{equation}\label{def:approx-system}
\begin{aligned}
\partial_t v_i^m + c_i \partial_x v_i^m + \sum_{j,k}A_{ijk} v_j^{m-1} v_k^{m-1}=0, \\
v_i^m(x, 0)=\phi_i(x), 
\end{aligned}
\end{equation}
for $m \ge 1$ and $i=1, \cdots, p,$ where: \\
$\bullet$ $\phi_i \in L^1(J)$ are the initial data associated with the Cauchy problem in (\ref{eq:Tartar});\\
$\bullet$ at the first iteration $m=1$, $$v_i^{m-1}=u_i^0:=\phi_i(x-c_i t),$$ i.e. the solution to the linear Cauchy problem:
$$\begin{aligned}
\partial_t u_i^0 + c_i \partial_x u_i^0=0, \\
u_i^0(x, 0)=\phi_i(x).
\end{aligned}$$

Notice that, by definition, $v_i \in V_i$ in (\ref{def-Vi}) at each iteration. We now prove that this iteration model has a fixed point. Denoting by
$$\alpha_i^m:=\|\partial_t v_i^m + c_i \partial_x v_i^m \|_{L^1(D)},$$
by applying Proposition \ref{prop-fubini} to system (\ref{def:approx-system}), one gets:

$$\alpha_i^m \le \sum_{j,k} \dfrac{|A_{ijk}|}{|c_j-c_k|} |||v_j^{m-1}|||_{V_j}|||v_k^{m-1}|||_{V_k}.$$

On the other hand, from Lemma \ref{lem:equivalence-norms},

\begin{align*}
\alpha_i^m \le \sum_{j,k}\dfrac{|A_{ijk}|}{|c_j-c_k|} (\alpha_j^{m-1}+\varepsilon_j)  (\alpha_k^{m-1}+\varepsilon_k),
\end{align*}

where

$$\varepsilon_i:=\|\phi_i\|_{L^1(J)}.$$

Define $\displaystyle \gamma:= \max_{j, k} \sum_i \dfrac{|A_{ijk}|}{|c_j-c_k|}$. Therefore, summing up $i=1, \cdots, p$,

$$\sum_i \alpha_i^m \le \gamma \Bigg(\sum_i \varepsilon_i + \sum_i \alpha_i^{m-1}\Bigg)^2,$$

and if we denote $\displaystyle r_m:=\sum_i \alpha_i^m, \; E_0=\sum_i \varepsilon_i$, one gets

$$r_{m} \le \gamma (E_0+r_{m-1})^2,$$

i.e. the iterative scheme maps the closed set 
$$B_{r_{m-1}}:=\{ u \in V_i, \; i=1, \cdots, p, \, | \, u(x,0)=\phi_i(x), \quad \|\partial_t u + c_i \partial_x u\|_{L^1(D)} \le r_{m-1}\}$$

into $B_{r_m}$,
with $r_m=\gamma (E_0+r_{m-1})^2$.\\
In order to apply the fixed point theorem, we then require that
$$r_m=\gamma (E_0+r_{m-1})^2 \le r_{m-1}.$$
This yields
\begin{equation}\label{ineq:r-constraints}
r:=r_{m-1}   \le \dfrac{1-2 \gamma E_0 + \sqrt{1-4\gamma E_0}}{2 \gamma}, \quad 4 \gamma E_0 < 1.
\end{equation}
We need to check that the iterative scheme defines a strict contraction. We compute
\begin{align*}
\|\partial_t(v_i^m&-v_i^{m-1})+c_i \partial_x (v_i^m-v_i^{m-1})\|_{L^1(D)} \le \sum_{j, k} |A_{ijk}| \|v_j^{m-1} v_k^{m-1} - v_j^{m-2} v_k^{m-2}\|_{L^1(D)}\\
&\le \sum_{j, k} |A_{ijk}| \Bigg( \|v_j^{m-1} (v_k^{m-1}-v_k^{m-2})\|_{L^1(D)} + \|(v_j^{m-1}-v_j^{m-2}) v_k^{m-2}\|_{L^1(D)} \Bigg)\\
& \le \sum_{j, k} \dfrac{|A_{ijk}|}{|c_i-c_j|} \Bigg(|||v_j^{m-1}|||_{V_j} |||v_k^{m-1}-v_k^{m-2}|||_{V_k} + |||v_k^{m-2}|||_{V_k} |||v_j^{m-1}-v_j^{m-2}|||_{V_j}\Bigg)\\
& \le \sum_{k} \dfrac{|A_{ijk}|}{|c_i-c_j|} (r+E_0) |||v_k^{m-1}-v_k^{m-2}|||_{V_k} +  \sum_{j} \dfrac{|A_{ijk}|}{|c_i-c_j|} (r+E_0) |||v_j^{m-1}-v_j^{m-2}|||_{V_j}\\
& = 2 (r+ E_0) \sum_{j} \dfrac{|A_{ijk}|}{|c_i-c_j|}|||v_j^{m-1}-v_j^{m-2}|||_{V_j}.
\end{align*}
Therefore Lemma \ref{lem:equivalence-norms} yields
\begin{align*}
\sum_i |||v_i^m - v_i^{m-1}|||_{V_i} \le 2 \gamma (r+E_0) \sum_j |||v_j^{m-1}-v_j^{m-2}|||_{V_j}.
\end{align*}
Choosing $r=E_0$ (in accordance with (\ref{ineq:r-constraints})), the Lipschitz constant is $4 \gamma E_0$ and the iterative scheme defines a strict contraction on $B_r$ provided that
$$4 \gamma E_0 < 1.$$
\subsection*{Uniqueness}
Now let $u_i^m, \bar{u}_i^m$ be two outputs of the iterative scheme, with initial data $\phi_i, \, \overline{\phi}_i$ respectively. We use the following notation:

\begin{align*}
\|\phi_i\|_{L^1(J)}= \varepsilon_i, \; \sum_i \varepsilon_i=E_0;  \quad  \|\overline{\phi}_i\|_{L^1(J)}= \overline{\varepsilon}_i, \; \sum_i \overline{\varepsilon}_i=\bar{E}_0<E_0.
\end{align*}

Taking the difference, one has
\begin{align*}
\sum_i |||u_i^m - \bar{u}_i^m|||_{V_i} & \le \gamma \sum_{j, k} \Bigg( (\alpha_j+\varepsilon_j) |||u_k^{m-1}-\bar{u}_k^{m-1}|||_{V_k} + (\overline{\alpha}_k+\overline{\varepsilon}_k) |||u_j^{m-1}-\bar{u}_j^{m-1}|||_{V_j}\Bigg)\\
& \le \gamma (2r+E_0+\bar{E}_0) \sum_k |||u_k^{m-1}-\bar{u}_k^{m-1}|||_{V_k}\\
& \le 4 \gamma E_0  \sum_k |||u_k^{m-1}-\bar{u}_k^{m-1}|||_{V_k}\\
& \le \cdots\\
& \le (4\gamma E_0)^m \sum_k \|\phi_i - \overline{\phi}_i\|_{L^1(J)}  ,
\end{align*}
which yields the proof of uniqueness, since $4 \gamma E_0 < 1$.
\subsection*{$L^1$ initial data}
In the case of more general initial data $\phi_i \in L^1(J)$, we consider a partition of the real line $\mathbb{R}=\cup_h J_h$, where $J_h$ are closed connected intervals  with $\sum_h \|\phi_h\|_{L^1(J_h)} \le E_0$. Let $D_h$, defined as (\ref{def:dominio-D}), be the domain of the solution $u_i^h$ related to the interval $J_h$. When $J_{h_1} \cap J_{h_2} \neq \emptyset $, the two solutions $u_i^{h_1}, \, u_i^{h_2}$ coincide thanks to uniqueness proved above. Therefore we can glue the $u_i^h$ together to get our solution.
\end{proof}

\section*{Acknowledgement}
Roberta Bianchini thanks Roberto Natalini for introducing her this line of research and the beautiful paper by Tartar \cite{Tartar}.\\
This project has received funding from the European Research Council (ERC) under the European Union's Horizon 2020 research and innovation program Grant agreement No 637653, project BLOC ``Mathematical Study of Boundary Layers in Oceanic Motion’’. This work was  supported by the SingFlows project, grant ANR-18-CE40-0027 of the French National Research Agency (ANR).

%
%
%

\end{document}